\numberwithin{equation}{section}
\newtheorem{theorem}[equation]{Theorem}
\newtheorem{lemma}[equation]{Lemma}
\newtheorem{prop}[equation]{Proposition}
\newtheorem{cor}[equation]{Corollary}
\def\QQ{\mathbb{Q}}
\def\RR{\mathbb{R}}
\def\ZZ{\mathbb{Z}}
\title{A construction of polynomials with squarefree discriminants}
\author{Kiran S. Kedlaya}
\date{May 4, 2011}
\begin{document}

\maketitle

\begin{abstract}
For any integer $n \geq 2$ and any nonnegative integers $r,s$ with $r+2s = n$,
we give an unconditional construction of infinitely many monic irreducible polynomials
of degree $n$ with integer coefficients having squarefree discriminant and exactly $r$ real roots.
These give rise to number fields of degree $n$, signature $(r,s)$, Galois group $S_n$,
and squarefree discriminant;
we may also force the discriminant to be coprime to any given integer.
The number of fields produced with discriminant in the range $[-N, N]$ is at least $c N^{1/(n-1)}$.
A corollary is that for each $n \geq 3$, infinitely many quadratic number fields
admit everywhere unramified degree $n$ extensions whose normal closures have Galois group $A_n$. 
This generalizes results of Yamamura, who treats the case $n = 5$, and
Uchida and Yamamoto, who allow general $n$ but do not control the real place.
\end{abstract}

\section{Introduction and results}

Throughout this paper, fix an integer $n \geq 2$, and let $P(x)$ denote a monic polynomial of degree $n$
with integer coefficients. The \emph{discriminant} $\Delta(P)$ is the integer with the property that if
we factor $P(x) = (x - r_1) \cdots (x - r_n)$ over some algebraically closed field, then
\[
\Delta(P) = \prod_{1 \leq i < j \leq n} (r_i - r_j)^2.
\]
Since $\Delta(P)$ is symmetric in the roots of $P$, it can be expressed as a polynomial
in the coefficients of $P$; this polynomial has integer coefficients and turns out to be irreducible.

When $P$ is irreducible and the integer $\Delta(P)$ is squarefree, the number field
$K = \mathbb{Q}[x]/(P(x))$ has ring of integers $\ZZ[x]/(P(x))$,
the Galois closure $L$ of $K$
has Galois group $S_n$, and $L$ is everywhere unramified over its subfield $\QQ(\sqrt{\Delta(P)})$ (see
\cite{egm}, or \cite[Theorem~1]{kondo} for a slightly stronger statement).
These properties suggest the question of how often $\Delta(P)$ is squarefree. When the coefficients of $P$
are chosen suitably randomly, this is expected to occur with probability $\prod_p a_p$ where $p$ runs
over prime numbers
and $a_p$ denotes the probability that $\Delta(P)$ is not divisible by $p^2$. These probabilities have been computed by
Brakenhoff \cite{ash}:
\[
a_p = 
\begin{cases}
\frac{1}{2} & p = 2, n \geq 2 \\
1 - \frac{1}{p^2} & p > 2, n=2 \\
1 - \frac{2}{p^2} + \frac{1}{p^3} & p > 2, n=3 \\
1 - \frac{1}{p} + \frac{(p-1)^2 (1 - (-p)^{-n+2})}{p^2(p+1)}
& p > 2, n \geq 4.
\end{cases}
\]
Unfortunately, while it is easy to prove by sieving arguments that a randomly chosen integer is squarefree with the expected
probability of $6/\pi^2$ (see for instance \cite[18.6]{bib:hw}), 
it seems quite difficult to prove that a polynomial takes squarefree values with
the expected probability unless the degree is very small compared to the number of variables.
For example, for univariate polynomials, this is known in all degrees under  
the $abc$ conjecture by a theorem of Granville \cite{granville},
but unconditionally only up to degree 3 by work
of Hooley \cite{bib:hooley}.
Granville's conditional theorem was extended to multivariate polynomials by Poonen
\cite{poonen}; that result implies that under $abc$, $\Delta(P)$ is squarefree with the expected probability.
This remains true, with a suitably adjusted probability, if one imposes local conditions at finitely many places (including the infinite place).

However, without assuming any conjectures, it is not trivial to establish even the existence of infinitely many
polynomials of a given degree with squarefree discriminant. This is given by the following theorem.
\begin{theorem} \label{T:squarefree}
Let $n \geq 2$ be an integer, let $r,s$ be nonnegative integers with $r + 2s = n$, and let $S$ be a finite
set of primes. Then there exist infinitely many monic irreducible polynomials $P(x)$ of degree $n$
with integer coefficients having exactly $r$ real roots, such that $\Delta(P)$ is squarefree and not divisible
by any of the primes in $S$. More precisely, for some $c>0$ (depending on $n$ and $S$), the number of distinct squarefree discriminants produced
in the range $[-N, N]$ is at least $cN^{1/(n-1)}$.
\end{theorem}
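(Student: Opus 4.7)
I would take a one-parameter family $P_t(x) := P_0(x) - t$ for a carefully chosen monic $P_0 \in \ZZ[x]$ of degree $n$, and analyze the integer
$F(t) := \Delta(P_t) = \pm n^n \prod_{j=1}^{n-1}(P_0(\beta_j) - t) \in \ZZ[t]$,
a polynomial of degree $n-1$ in $t$, where $\beta_1, \ldots, \beta_{n-1}$ are the critical points of $P_0$. Since $|F(t)| \ll T^{n-1}$ for $|t| \leq T$, producing $\geq cN^{1/(n-1)}$ squarefree discriminants in $[-N,N]$ reduces to exhibiting a positive density of integer $t$ in an interval of length $T = N^{1/(n-1)}$ that simultaneously satisfy all the conditions.

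\textbf{Choosing $P_0$.} I would fix $P_0$ so that (i) $P_0$ is Morse over $\RR$ and the heights of its critical values are arranged so that $P_0(x) = t$ has exactly $r$ real solutions for every $t$ in some open interval $I \subset \RR$ (a real Morse count using the sign pattern of $t - P_0(\beta_j)$), and (ii) for each $p \in S$ and each prime in a finite ``bad set'' (including primes dividing $n$ and $\mathrm{disc}_t F$), the reduction $P_0 \bmod p$ is prescribed so that one congruence class of $t$ forces both $p \nmid F(t)$ and $P_t \bmod p$ irreducible. The latter pins down irreducibility of $P_t$ over $\QQ$, which squarefree-ness of $\Delta$ alone does not provide. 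Combining these finitely many congruences by the Chinese remainder theorem restricts $t$ to a single class $t \equiv t_0 \pmod M$, still a positive-density subset of $[-T, T] \cap I$.

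\textbf{Squarefree sieve.} To force $F(t)$ squarefree I would run a Mobius sieve. For each prime $p$ outside the bad set, $F \bmod p$ is squarefree as a polynomial in $t$, so Hensel lifting gives $\#\{t \bmod p^2 : p^2 \mid F(t)\} \leq n-1$, contributing local density $O(1/p^2)$ to the bad set. The expected global density of good $t$ is then the convergent product $\prod_p (1 - \rho(p)/p^2) > 0$. Each surviving $t$ yields a polynomial $P_t$ of signature $(r,s)$, squarefree discriminant coprime to $S$, and (combined with the mod-$p$ irreducibility above) irreducible over $\QQ$, hence of Galois group $S_n$ by the criterion recalled in the introduction. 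Since $F$ is nonconstant, all but finitely many such $t$ give distinct integer discriminants, completing the count.

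\textbf{Main obstacle.} The subtle point is making the sieve rigorous against the large-prime tail: integers $t$ with $p^2 \mid F(t)$ for $p$ in the range $(T^{1/2}, T^{(n-1)/2}]$. For a generic degree-$(n-1)$ polynomial this is Hooley's problem, solved unconditionally only up to degree $3$, so for $n \geq 5$ one must exploit the special structure of $F$. The natural mechanism is the factorization $F(t) = \pm n^n \prod_j (P_0(\beta_j) - t)$ over the splitting field $K$ of $P_0'$, which presents $F(t)$ up to the explicit constant $n^n$ as a norm form from $\mathcal O_K$; squarefree-ness then reduces to a sieve over prime ideals of $K$, one linear factor at a time, rather than a single high-degree polynomial sieve over $\ZZ$. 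Arranging $P_0$ (and hence $K$) so that the contributions from distinct primes of $K$ above a rational prime $p$ genuinely decouple, while remaining compatible with the real-signature interval $I$ and the $S$-congruence conditions, is the technical heart of the argument.
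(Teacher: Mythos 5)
Your plan correctly identifies the right shape of family (a pencil $P_0(x)-t$ whose discriminant is a degree-$(n-1)$ polynomial in $t$) and correctly locates the central difficulty, but the fix you propose for that difficulty does not work, and this is exactly the point where the paper does something you missed. If $P_0$ is merely Morse, its critical points $\beta_j$ are conjugate algebraic irrationalities, and $F(t)=\pm n^n\prod_j(P_0(\beta_j)-t)$ is (up to the constant) just the minimal/characteristic polynomial of $\alpha=P_0(\beta)$ evaluated at $t$ --- an essentially arbitrary irreducible polynomial of degree $n-1$ in one variable. Presenting it as a norm form $N_{K/\QQ}(\alpha-t)$ does not decouple anything: the condition $p^2\mid F(t)$ still pins $t$ to $O(1)$ residue classes modulo $p^2$ whether you count by rational primes or by primes of $K$, and the unmanageable large-prime tail $p\in(T^{1/2},T^{(n-1)/2}]$ is unchanged. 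A one-parameter norm specialization has no more averaging than a one-variable polynomial; the binary-form results of Greaves et al.\ that you are implicitly invoking require two genuinely independent variables. So for $n\geq 5$ your sieve is exactly Hooley's open problem. The paper's key idea, which your proposal lacks, is to \emph{choose $P_0$ so that $P_0'$ has all rational roots} (the family $Q_a(x)=n(x-a_1/n)(x-a_2)\cdots(x-a_{n-1})$, $P_{a,b}=b+\int_0^x Q_a$). Then the critical values are integers and $\Delta_{a,b}$ splits into \emph{linear} factors in $b$ over $\ZZ$, so the squarefree sieve is no harder than the classical density of squarefree integers (Helfgott's Proposition~3.4). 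The price is arranging conditions (i)--(v) of Section~2 (integrality, irreducibility mod some $p_1$, coprimality of the linear factors) by congruences on $a_1,\dots,a_{n-1}$, which is what Lemmas~\ref{L:irreducible} and~\ref{L:distinct values} accomplish.

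There is a second gap in your treatment of the signature. For a fixed $P_0$ and $r\geq 3$, the set of $t$ for which $P_0(x)-t$ has exactly $r$ real roots is a \emph{bounded} interval $I$ (for $|t|$ large the polynomial has at most $2$ real roots), so $[-T,T]\cap I$ contains only $O(1)$ integers and your construction yields only finitely many polynomials of the prescribed signature. The paper resolves this by homogeneity: it scales $a_i=A_iq$ and lets $b$ range over integers with $b/q^n$ in a fixed interval, so the admissible window for $b$ grows like $q^n$. This converts the problem into a two-variable sieve on the binary form $\prod_i(c_ib+d_iq^n)$ restricted to a sector (Lemma~\ref{L:squarefree sieve2}), which is where a Greaves-type argument legitimately enters --- but again only because the form is a product of linear factors. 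You would need to incorporate both ideas (rational critical points, and the scaling of the whole family) for the proposal to go through.
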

The key idea in the proof is to construct $P$ in a special way to make squarefree sieving on the discriminant easier.
For $a_1,\dots,a_{n-1},b \in \QQ$, put
\begin{equation}
Q_a(x) = n(x-a_1/n)(x - a_2) \cdots (x-a_{n-1}), \quad
P_{a,b}(x) = b + \int_0^x Q_a(t)\,dt, \quad
\Delta_{a,b} = \Delta(P_{a,b}).
\end{equation}
Then $\Delta_{a,b}$ is 
(up to sign) the product of the evaluations
of $Q_a$ at the roots of $P_{a,b}$, which is 
(up to sign) the resultant of $P_{a,b}$
and $Q_a$. However, one may also compute the resultant by evaluating $P_{a,b}$
at each root of $Q_a$:
\[
\Delta_{a,b} = \pm (n^n P_{a,0}(a_1/n) + n^n b) \prod_{i=2}^{n-1} (P_{a,0}(a_i) + b).
\]
We then choose $a_1,\dots,a_{n-1}$ so that for $b$ of a certain form, the apparent probability that
$P_{a,b}$ has the desired properties is positive. Since $\Delta_{a,b}$ factors as a product of linear polynomials in $b$,
it is tractable to carry out the sieving argument to confirm that the apparent probability is correct;
however, in lieu of doing the sieving by hand, we appeal to a very general squarefree sieve set up by
Helfgott \cite{helfgott}.
(The argument is a bit simpler if one does not 
insist on the number of real roots; see Proposition~\ref{P:squarefree}.)

From Theorem~\ref{T:squarefree}, we obtain at once the following corollary.
\begin{cor} \label{C:squarefree1}
With notation as in Theorem~\ref{T:squarefree},
there exist infinitely many number fields $K$ of degree $n = r+2s$ and signature $(r,s)$
such that the Galois closure $L$ of $K$ has Galois group $S_n$ over $\mathbb{Q}$, and the discriminant
of $K$ is squarefree and not divisible by any of the primes in $S$.
More precisely, for some $c>0$ (depending on $n$ and $S$), the number of distinct such discriminants produced
in the range $[-N, N]$ is at least $cN^{1/(n-1)}$.
\end{cor}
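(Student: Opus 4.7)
The plan is to deduce the corollary directly from Theorem~\ref{T:squarefree} by invoking the standard facts already cited in the introduction. Given any monic irreducible $P(x) \in \ZZ[x]$ of degree $n$ with exactly $r$ real roots and squarefree discriminant $\Delta(P)$ not divisible by any prime in $S$, I would set $K = \QQ[x]/(P(x))$. The degree of $K$ is $n$ and its signature is $(r,s)$, because $P$ has $r$ real roots and hence $s$ conjugate pairs of complex roots. The facts quoted before the statement of Theorem~\ref{T:squarefree} (from \cite{egm,kondo}) say that squarefreeness of $\Delta(P)$ forces the ring of integers $\mathcal{O}_K$ to equal $\ZZ[x]/(P(x))$ and the Galois closure $L$ of $K$ to have Galois group $S_n$ over $\QQ$. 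In particular, $P$ is irreducible, and the discriminant of $K$ as a number field coincides with $\Delta(P)$, so the field discriminant is squarefree and coprime to every prime in $S$.

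For the quantitative part, I would simply note that distinct values of the field discriminant give rise to distinct number fields (up to isomorphism), so the lower bound on the number of distinct polynomial discriminants produced by Theorem~\ref{T:squarefree} immediately translates into the same lower bound on the number of distinct fields $K$ with the listed properties. Since a given field appears at most once per value of its discriminant in the count, no deduplication step is needed beyond what Theorem~\ref{T:squarefree} already provides.

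The only potential subtlety is the logical direction: Theorem~\ref{T:squarefree} counts polynomial discriminants, whereas the corollary counts field discriminants, but the identification $\operatorname{disc}(K) = \Delta(P)$ under squarefreeness makes these the same integers, so there is no real obstacle here. This is why the corollary is immediate rather than requiring a separate argument.
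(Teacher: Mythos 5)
Your proposal is correct and matches the paper's treatment: the paper deduces the corollary immediately from Theorem~\ref{T:squarefree} using exactly the facts cited in the introduction (squarefree $\Delta(P)$ forces $\mathcal{O}_K = \ZZ[x]/(P(x))$, hence $\operatorname{disc}(K) = \Delta(P)$, and Galois group $S_n$), with the signature read off from the real-root count and the discriminant count carrying over verbatim.
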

As noted earlier, these conditions ensure that 
$L$ is an unramified $A_n$-extension of $\QQ(\sqrt{\Delta(K)})$.
We thus obtain the following.
\begin{cor} \label{C:squarefree2}
Let $S$ be a finite set of primes.
For each $n \geq 3$, there exist
infinitely many real quadratic fields unramified above the primes in $S$
and admitting an $A_n$-extension unramified at all finite and infinite primes.
More precisely, for some $c>0$ (depending on $n$ and $S$), the number of distinct such discriminants produced
in the range $[-N, N]$ is at least $cN^{1/(n-1)}$.
\end{cor}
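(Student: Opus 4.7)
The plan is to invoke Corollary~\ref{C:squarefree1} with signature $(r,s) = (n,0)$, i.e., the totally real case, and with the prime set augmented to $S' = S \cup \{2\}$. This yields infinitely many totally real degree-$n$ number fields $K$ with $\operatorname{Gal}(L/\QQ) = S_n$ (where $L$ is the Galois closure) and with squarefree positive discriminants $\Delta(K)$ coprime to $S'$; positivity of $\Delta(K)$ is forced by $s=0$. Set $F = \QQ(\sqrt{\Delta(K)})$, a real quadratic field. The remarks preceding Corollary~\ref{C:squarefree1} identify $L$ as an extension of $F$ that is unramified at all finite places, and the standard identification of the fixed field of the sign character $S_n \to \{\pm 1\}$ as $\QQ(\sqrt{\Delta(K)})$ gives $\operatorname{Gal}(L/F) = A_n$, an honest extension since $n \geq 3$.

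Unramification at the infinite places is automatic in this signature: all roots of $P$ are real, so $L \subset \RR$ is totally real; $F$ is also totally real (as $\Delta(K) > 0$); hence every archimedean place of $F$ extends to a real place of $L$ with no ramification. For the ramification of $F/\QQ$, the odd part of $\operatorname{disc}(F)$ divides $\Delta(K)$, which is coprime to $S$, so the odd primes of $S$ are automatically unramified in $F$. If $2 \notin S$ the argument is complete, and the lower bound $cN^{1/(n-1)}$ on distinct discriminants of $K$ transfers immediately to a bound on distinct real quadratic discriminants, since distinct positive squarefree integers determine distinct real quadratic fields.

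If $2 \in S$, one must further ensure $\Delta(K) \equiv 1 \pmod{4}$, equivalently that $2$ is unramified in $F$; I view this as the main (minor) technical point. The factorization
\[
\Delta_{a,b} = \pm(n^n P_{a,0}(a_1/n) + n^n b) \prod_{i=2}^{n-1}(P_{a,0}(a_i)+b)
\]
expresses $\Delta_{a,b}$ as a product of linear polynomials in $b$, so its residue modulo $4$ is controlled by the reductions of the $a_i$ modulo $8$ and of $b$ modulo $4$. Prescribing these reductions to force $\Delta_{a,b} \equiv 1 \pmod{4}$ amounts to a local condition at $2$; Helfgott's squarefree sieve \cite{helfgott}, already used in the proof of Theorem~\ref{T:squarefree}, accommodates arbitrary local conditions at finitely many primes, so the count $\gg N^{1/(n-1)}$ survives the additional constraint and the conclusion follows as before.
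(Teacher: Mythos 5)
Your proposal matches the paper's (implicit) derivation of Corollary~\ref{C:squarefree2}: apply Corollary~\ref{C:squarefree1} in the totally real signature $(r,s)=(n,0)$, so that $\Delta(K)>0$ makes $\QQ(\sqrt{\Delta(K)})$ real quadratic and the total reality of $L$ gives unramifiedness at the infinite places, while the squarefree-discriminant criterion cited in the introduction gives unramifiedness of $L/\QQ(\sqrt{\Delta(K)})$ at the finite places. Your extra care at $p=2$ --- noting that $2\nmid\Delta(K)$ alone does not make $2$ unramified in the quadratic field and that one must further force $\Delta(K)\equiv 1\pmod{4}$ as a local condition in the sieve --- addresses a point the paper passes over silently, and is a correct refinement rather than a departure from its approach.
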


Corollary~\ref{C:squarefree2} generalizes work of several authors.
Uchida \cite{bib:uchida}
and Yamamoto \cite{bib:yamamoto}
proved independently that there exist infinitely
many real and infinitely many imaginary quadratic fields with
$A_n$-extensions unramified at all finite primes.
They did so by constructing number fields of the form $\QQ[x]/(x^n+ax+b)$
and showing that under suitable conditions on $a$ and $b$, their
normal closures are frequently $S_n$-extensions
of $\QQ$ unramified over a quadratic subfield. However, these number
fields fail to be totally real for $n \geq 4$, so the construction
does not produce the result of Corollary~\ref{C:squarefree2}. The case $n=5$ of
Corollary~\ref{C:squarefree2} was
obtained by Yamamura \cite{bib:yamamura}; however, Yamamura's
construction produces number fields of the form $\QQ[x]/(P(x))$ where
the polynomial $P$ is of a very special form which does not generalize
to higher degree. (Note that none of the aforementioned constructions guarantee squarefree discriminants.)

One may also deduce from Corollary~\ref{C:squarefree1} that the number of $S_n$-number fields
of signature $(r,s)$ with discriminant in $[-N, N]$ is bounded below by $c N^{1/(n-1)}$.
This is far inferior to what can be shown using other methods:
for $n \leq 5$ the number of such fields is known to be $O(N)$ by work of Bhargava \cite{bhargava4, bhargava5},
while for larger $n$, Ellenberg and Venkatesh \cite{ellenberg-venkatesh} have given a lower bound of $c N^{1/2 + 1/n^2}$.
However, the fields we produce have squarefree discriminants and 
monogenic rings of integers, which is not guaranteed by these
other constructions (although Bhargava's method should allow for squarefree sieving; see
\cite{bhargava-new}).

\section{Construction of squarefree discriminants}

In this section, we give the proof of a weaker form of Theorem~\ref{T:squarefree},
in which we do not control the number of real roots. This helps isolate the essential features of the
construction.
\begin{prop} \label{P:squarefree}
Let $n \geq 2$ be an integer and let $S$ be a finite
set of primes. Then there exist infinitely many monic irreducible polynomials $P(x)$ of degree $n$
with integer coefficients such that $\Delta(P)$ is squarefree and not divisible
by any of the primes in $S$. More precisely, for some $c>0$ (depending on $n$ and $S$), the number of distinct discriminants produced
in the range $[-N, N]$ is at least $cN^{1/(n-1)}$.
\end{prop}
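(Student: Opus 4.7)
The plan is to carry out the construction sketched in the introduction just after Theorem~\ref{T:squarefree}. I choose $a_2,\ldots,a_{n-1} \in \ZZ$ to be pairwise distinct and $a_1 \in \ZZ$ with the necessary divisibility conditions (e.g.\ $n \mid a_1$ together with finitely many further congruences) so that $P_{a,0} \in \ZZ[x]$, and I vary $b$ over the integers. As recorded in the introduction,
\[
\Delta_{a,b} = \pm n^n \prod_{i=1}^{n-1}(c_i + b), \qquad c_1 = P_{a,0}(a_1/n),\ c_i = P_{a,0}(a_i)\text{ for } i \geq 2,
\]
so $\Delta_{a,b}$ is a polynomial of degree $n-1$ in the single parameter $b$. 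Consequently $|b| \leq B$ forces $|\Delta_{a,b}| = O(B^{n-1})$, and the choice $B \sim N^{1/(n-1)}$ matches the claimed lower bound $cN^{1/(n-1)}$, provided a positive proportion of $b \in [-B,B]$ produce an admissible $P_{a,b}$. Distinctness of the resulting discriminants is automatic because $\Delta_{a,b}$ is nonconstant in $b$.

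I then fix $a$ generically so that the integers $c_1,\ldots,c_{n-1}$ are pairwise distinct, which makes the linear forms $c_i + b$ in $b$ pairwise coprime. Next I restrict $b$ to an arithmetic progression by combining two families of congruence conditions on disjoint moduli via the Chinese Remainder Theorem. First, at an auxiliary prime $q \notin S$ with $q \nmid n$, I force $P_{a,b} \bmod q$ to be an irreducible polynomial of degree $n$ in $\FF_q[x]$, which guarantees irreducibility of $P_{a,b}$ over $\QQ$. Second, for each $p \in S$ (with minor adjustments to $a$ when $p \mid n$, to absorb the $n^n$ prefactor into the congruence), I require $b \not\equiv -c_i \pmod p$ for all $i$, ensuring $p \nmid \Delta_{a,b}$; for small $p$ this may require further restrictions on $a$ so that the $c_i$ do not cover all residue classes modulo $p$.

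Finally, I apply Helfgott's squarefree sieve \cite{helfgott} to the product of distinct integer linear forms $\prod_{i=1}^{n-1}(c_i + b)$ with $b$ in the above progression. The sieve yields a positive proportion of such $b \in [-B,B]$ for which this product is squarefree, provided that at every prime $p$ the local density of $b$ satisfying $p^2 \nmid \prod(c_i+b)$ is positive; since the $c_i$ are distinct, for $p$ large this density is bounded below by $1 - (n-1)/p^2 > 0$ by an easy mod-$p^2$ count, and for small $p$ the positivity is ensured by the congruence choices above. Multiplying by the fixed constant $n^n$ affects the squarefree condition only at primes dividing $n$, which are handled as part of step (ii). The main obstacle is verifying that Helfgott's sieve delivers an \emph{effective} lower bound of the correct order of magnitude for squarefree values of a product of linear forms in a short integer interval, and then matching this count against the conditions of step (ii); the remaining ingredients (the resultant computation, mod-$q$ irreducibility, and CRT) are routine.
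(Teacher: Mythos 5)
Your overall strategy is the paper's: fix $a$ by congruence conditions, view $\Delta_{a,b}$ as a product of coprime linear forms in $b$, impose local conditions by CRT, and finish with Helfgott's sieve. But there is one genuine gap, precisely at the step you dismiss as routine: the mod-$q$ irreducibility. You cannot ``force $P_{a,b} \bmod q$ to be an irreducible polynomial of degree $n$'' by choosing congruences freely, because $P_{a,b}$ is not a general polynomial: by construction its derivative $Q_a(x) = n(x-a_1/n)(x-a_2)\cdots(x-a_{n-1})$ splits into linear factors over $\QQ$, hence its reduction modulo $q$ splits into linear factors over $\FF_q$. So what you actually need is the existence of a prime $q \notin S$ and a monic degree-$n$ polynomial that is irreducible over $\FF_q$ \emph{and} whose derivative factors into distinct linear factors over $\FF_q$; only then can you choose the residues of $a_1/n, a_2, \dots, a_{n-1}$ modulo $q$ to match its critical points. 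The existence of such a pair $(q, R)$ is not obvious and is the one non-routine ingredient of the whole construction: the paper proves it (Lemma~\ref{L:irreducible}) by taking $R(x) = x^n - p_0^{n-1}x + p_0$, which is Eisenstein at $p_0$ over the splitting field $L = \QQ(\zeta_{n-1}, n^{1/(n-1)})$ of $R'$, and then applying Chebotarev to find degree-one primes of $L$ modulo which $R$ stays irreducible. Without this (or some substitute), your irreducibility step does not go through, and with it the tension you flag in the final sentence largely disappears.

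Two smaller points. First, your suggested normalization $n \mid a_1$ is dangerous at primes $p$ dividing $n$: the factor $n^nP_{a,0}(a_1/n) + n^n b$ reduces to a constant $\equiv a_1^n \pmod{p}$ there, so if $p \mid a_1$ this factor is divisible by $p$ for every $b$ and you risk losing squarefreeness (and condition (ii) when $p \in S$ divides $n$); the paper instead takes $a_1$ \emph{coprime} to $n$ and divisible by $n-1$, with $a_2,\dots,a_{n-1}$ divisible by $n!$, which simultaneously gives integrality of $P_{a,0}$ and nonvanishing of that factor mod $p$ for all $p \leq n$. Second, ``fix $a$ generically so that the $c_i$ are pairwise distinct'' needs the (mild but not free) verification that the differences $P_{a,0}(a_i) - P_{a,0}(a_j)$ are nonzero as polynomials in $a$ (the paper's Lemma~\ref{L:distinct values}, via a sign argument on the integral), and the genericity must then be made compatible with all the other congruences, which the paper does by reducing modulo an auxiliary prime $p_2$.
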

The approach to the proof is to consider polynomials $P_{a,b}$ as in the introduction
with $a_1,\dots,a_{n-1} \in \ZZ$ chosen to satisfy the following conditions.
\begin{enumerate}
\item[(i)]
The polynomial $P_{a,0}$ has integral coefficients.
\item[(ii)]
For each $p \in S$, there exists $b_p \in \ZZ$ such that $\Delta_{a,b_p}$ is not divisible by $p$.
\item[(iii)]
For each $p \notin S$, there exists $b \in \ZZ$ such that $\Delta_{a,b}$ is not divisible by $p^2$.
(This ensures that $\Delta_{a,b}$ is squarefree with positive probability.)
\item[(iv)]
There exist $p_1 \notin S$ and $b_1 \in \ZZ$ such that $P_{a,b_1}$ is irreducible modulo $p_1$; in particular,
$\Delta_{a,b_1}$ is not divisible by $p_1$.
(This ensures that $P_{a,b_1}$ is irreducible for $b \equiv b_1 \pmod{p_1}$.)
\item[(v)]
The polynomials $n^n P_{a,0}(a_1/n) + n^n b, P_{a,0}(a_2) + b, \dots, P_{a,0}(a_{n-1}) + b$ in $b$
are pairwise coprime. (This ensures that $\Delta_{a,b}$ is squarefree as a polynomial in $b$.)
\end{enumerate}
Given such a choice of $a_1,\dots,a_{n-1}$,
let $T$ be the set of all $b \in \ZZ$ such that $b \equiv b_p \pmod{p}$ for each $p \in S$ and $b \equiv b_1 \pmod{p_1}$.
For $b \in T$, $P_{a,b}$ is irreducible with integer coefficients,
$\Delta_{a,b}$ is not divisible by any prime in $S$,
and by a sieving argument (see Lemma~\ref{L:squarefree sieve1} below), 
$\Delta_{a,b}$ is squarefree with positive probability.
Since no value of $\Delta_{a,b}$ occurs for more than $n-1$ choices of $b$,
by taking $b$ of size $O(N^{1/(n-1)})$, we obtain at least $c N^{1/(n-1)}$ squarefree values of $\Delta_{a,b}$ in the range $[-N, N]$ for some fixed $c>0$.

The sieving argument required in this argument is essentially no harder than
proving the density of squarefree integers; however, we have nothing to add to Helfgott's general presentation of the
squarefree sieve, so we defer to it instead.
\begin{lemma} \label{L:squarefree sieve1}
Let $m$ be a positive integer. Suppose that $c_1,d_1,\dots,c_k,d_k \in \ZZ$ are such that
the polynomial $A(x) = (c_1 x + d_1) \cdots (c_k x + d_k)$ is squarefree. For $p$ prime, put
\[
a(p) = \frac{\#\{x \in \{0,\dots,p^2-1\}: A(x) \not\equiv 0 \pmod{p^2}\}}{p^2}
\]
Suppose that $a(p) > 0$ for all $p$. Then 
\[
\lim_{N \to \infty} \frac{\#\{x \in \{-N, \dots, N\}: A(x) \mbox{ is squarefree}\}}{2N+1}
= \prod_p a(p) > 0.
\]
\end{lemma}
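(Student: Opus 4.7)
The plan is to run a standard squarefree sieve, exploiting the fact that $A(x) = \prod_{i=1}^k B_i(x)$ with $B_i(x) = c_i x + d_i$ linear. Squarefreeness of $A$ as a polynomial gives $c_i d_j - c_j d_i \neq 0$ for $i \neq j$, so there is a bound $B_0$ (depending only on the $c_i,d_i$) such that every prime $p > B_0$ divides neither any $c_i$ nor any resultant $c_i d_j - c_j d_i$. For such $p$, the congruences $B_i(x) \equiv 0 \pmod{p}$ have distinct single solutions, hence $p^2 \mid A(x)$ iff $p^2 \mid B_i(x)$ for exactly one $i$, giving $\rho(p) := \#\{y \bmod p^2 : p^2 \mid A(y)\} = k$ and $1 - a(p) = k/p^2$.

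By Möbius inversion,
\[
\#\{|x| \le N : A(x) \text{ squarefree}\} = \sum_{d \text{ squarefree}} \mu(d)\, N_d(N), \qquad N_d(N) = \#\{|x| \le N : d^2 \mid A(x)\}.
\]
For squarefree $d$, the Chinese Remainder Theorem gives $N_d(N) = \rho(d)(2N+1)/d^2 + O(\rho(d))$, where $\rho(d) = \prod_{p \mid d} \rho(p)$. Truncating the sum at $d \le Y$, the leading term is $(2N+1)\prod_{p \le Y} a(p)$ plus an error of size $O(Y \prod_{p \le Y}(1+\rho(p)/p^2))$. Since $1 - a(p) = O(1/p^2)$, the infinite product $\prod_p a(p)$ converges absolutely, and by the hypothesis $a(p)>0$ it is strictly positive, so letting $Y \to \infty$ produces the claimed density provided the tail is controlled.

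The main obstacle, as always in squarefree sieving, is the tail $d > Y$. Here linearity is decisive: if $p^2 \mid B_i(x)$ with $|x| \le N$, then $p^2 \le |c_i| N + |d_i|$, so every prime factor of a contributing $d$ satisfies $p = O(\sqrt{N})$, and from $d^2 \mid A(x) = O(N^k)$ we get $d = O(N^{k/2})$ as well. Grouping the prime factors of a squarefree $d$ according to which $B_i$ they divide gives the multiplicative bound $N_d(N) \le k^{\omega(d)}(2N/d^2 + 1)$ (valid once the finitely many primes $\le B_0$ are handled separately), and $\sum_{d > Y} k^{\omega(d)}(2N/d^2 + 1)$ is then controlled by the usual device of splitting the range of $d$ at an intermediate threshold and choosing $Y = Y(N)\to\infty$ slowly enough that the main term dominates. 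Rather than carry this out by hand, one may, as the author does, simply invoke Helfgott's general squarefree sieve, which subsumes the entire calculation.
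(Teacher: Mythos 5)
The paper's own ``proof'' of this lemma is a one-line citation of Helfgott's Proposition~3.4, and since you also fall back on that citation at the end, your proposal lands in the same place. Your main-term analysis is correct: for $p$ larger than a bound $B_0$ depending on the $c_i$ and the resultants $c_id_j-c_jd_i$, the roots of the $B_i$ mod $p$ are distinct, so $\rho(p)=k$ and $1-a(p)=k/p^2$; the Euler product converges to a positive limit; and for each fixed $Y$ the finite inclusion--exclusion over $d\mid\prod_{p\le Y}p$ gives density $\prod_{p\le Y}a(p)+o(1)$.

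However, the elementary tail argument you sketch does not close as written, and this is precisely the point the citation is covering. You bound $N_d(N)\le k^{\omega(d)}(2N/d^2+1)$ and propose to sum over $Y<d\ll N^{k/2}$. The $2N/d^2$ part contributes $O(N/Y^{1-\epsilon})$, which is fine, but the $+1$ part contributes $\sum_{Y<d\ll N^{k/2}}k^{\omega(d)}\gg N^{k/2}$, which is not $o(N)$ once $k\ge2$; no splitting of the range of $d$ rescues a termwise absolute-value bound on the tail of the M\"obius sum. The standard repair for a product of linear forms avoids composite moduli in the tail entirely: write the squarefree count as $S(N,Y)-E$, where $S(N,Y)$ counts $|x|\le N$ with $p^2\nmid A(x)$ for all $p\le Y$ (handled by your main-term analysis), and where, for $Y\ge B_0$, $0\le E\le\sum_{i}\sum_{p>Y}\#\{|x|\le N:p^2\mid B_i(x)\}$, because a prime $p>B_0$ with $p^2\mid A(x)$ must satisfy $p^2\mid B_i(x)$ for a single $i$. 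Since $p^2\mid c_ix+d_i$ with $|x|\le N$ forces $p\ll\sqrt N$, this union bound over single primes is at most $k\sum_{Y<p\ll\sqrt N}(2N/p^2+1)=O(N/Y)+O(\sqrt N)$, which is $o(N)$ uniformly as $Y\to\infty$. This is where linearity is genuinely decisive; for factors of degree $\ge3$ the analogous tail is the hard open part of the squarefree sieve. With this repair --- or with the appeal to Helfgott that both you and the paper make --- the proof is complete.
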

Note that $\prod_p a(p)$ converges to a positive limit  because
$1 - k/p^2 \leq a(p) \leq 1$ for all $p$ not dividing $\prod_{i<j} (c_i d_j - c_j d_i)$. 
\begin{proof}
See \cite[Proposition~3.4]{helfgott}.
\end{proof}

It remains to check that conditions (i)-(v) can be enforced via certain congruence conditions on
$a_1,\dots,a_{n-1}$. To make these congruences compatible with the requirement
that the derivative of $P_{a,b}$ have only rational roots,
we need some auxiliary calculations.

\begin{lemma} \label{L:irreducible}
Let $p_0$ be a prime not dividing $n(n-1)$. Then there exist infinitely many primes $p_1$ modulo which
the polynomial $R(x) = x^n - p_0^{n-1}x + p_0$ is irreducible and its derivative
$R'(x) = nx^{n-1} - p_0^{n-1}$ splits into distinct linear factors.
\end{lemma}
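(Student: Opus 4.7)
The plan is to apply the Chebotarev density theorem to the compositum $F = LM$, where $L$ denotes the Galois closure of $R$ over $\QQ$ and $M$ denotes the splitting field of $R'$ over $\QQ$. Since the roots of $R'(x) = nx^{n-1} - p_0^{n-1}$ are the numbers $(p_0/n^{1/(n-1)})\zeta$ as $\zeta$ ranges over $(n-1)$-th roots of unity, we have $M = \QQ(\zeta_{n-1}, n^{1/(n-1)})$. The two desired conclusions for $p_1$ translate into conditions on the Frobenius conjugacy class of $p_1$ in $\mathrm{Gal}(F/\QQ)$: the restriction to $\mathrm{Gal}(L/\QQ) \subseteq S_n$ should be an $n$-cycle (equivalent to $R$ being irreducible modulo $p_1$), and the restriction to $\mathrm{Gal}(M/\QQ)$ should be trivial (equivalent, together with $p_1 \nmid p_0 n(n-1)$, to $R'$ splitting into distinct linear factors modulo $p_1$). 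Since $\mathrm{Gal}(F/\QQ)$ sits inside $\mathrm{Gal}(L/\QQ) \times \mathrm{Gal}(M/\QQ)$ as the fiber product over $\mathrm{Gal}(L \cap M/\QQ)$, the problem reduces to exhibiting an $n$-cycle $\sigma \in \mathrm{Gal}(L/\QQ)$ whose restriction to $L \cap M$ is trivial.

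The production of such a $\sigma$, which I expect to be the main substantive step, exploits the Eisenstein hypothesis. Since $R$ is Eisenstein at $p_0$ and $p_0 \nmid n$, the completion of $K = \QQ[x]/(R(x))$ at the unique prime above $p_0$ is a totally and tamely ramified extension of $\QQ_{p_0}$ of degree $n$. Its Galois closure $L'$ over $\QQ_{p_0}$ has the form $\QQ_{p_0}(\zeta_n, \pi^{1/n})$ for some uniformizer $\pi$, and its inertia subgroup is cyclic of order exactly $n$ (the ramification index), generated by the automorphism sending $\pi^{1/n} \mapsto \zeta_n \pi^{1/n}$. Because this inertia acts freely and transitively on the $n$ roots of $R$ (the stabilizer of any root is trivial, since the corresponding fixed field is all of $L'$), the generator acts as an $n$-cycle on the roots. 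Transporting this along a choice of prime of $L$ above $p_0$ yields the required $n$-cycle $\sigma$, lying inside the inertia subgroup $I_{p_0} \subseteq \mathrm{Gal}(L/\QQ)$.

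Finally, because $p_0 \nmid n(n-1)$ and the only primes ramifying in $M$ are divisors of $n(n-1)$, the field $M$, and hence its subfield $L \cap M$, is unramified at $p_0$. The restriction map $\mathrm{Gal}(L/\QQ) \to \mathrm{Gal}(L \cap M/\QQ)$ then sends $I_{p_0}$ into the trivial inertia of $\mathrm{Gal}(L \cap M/\QQ)$ at $p_0$, so $\sigma|_{L \cap M} = 1$. The pair $(\sigma, 1)$ therefore lies in $\mathrm{Gal}(F/\QQ)$; applying Chebotarev to its conjugacy class and excluding the finitely many primes dividing $p_0 n(n-1)$ yields the desired infinitude of primes $p_1$.
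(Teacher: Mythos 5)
Your proof is correct and follows essentially the same route as the paper's: both hinge on $R$ being Eisenstein at $p_0$ while the splitting field of $R'$ is unramified there (since $p_0 \nmid n(n-1)$), followed by an application of the Chebotarev density theorem. The only difference is presentational --- the paper applies Chebotarev over the splitting field of $R'$ to find degree-one primes modulo which $R$ is irreducible, whereas you apply it over $\QQ$ to the compositum; your explicit construction of the $n$-cycle from tame inertia at $p_0$ makes precise a step the paper leaves implicit.
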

\begin{proof}
The polynomial $R'$ has splitting field $L = \QQ(\zeta_{n-1}, n^{1/(n-1)})$,
in which $p_0$ does not ramify because $p_0$ does not divide $n(n-1)$.
Thus $R$ is an Eisenstein polynomial
with respect to any prime above $p_0$ in $L$;
in particular, $R$ is irreducible over $L$.
By the Chebotarev density theorem, there exist infinitely many prime ideals of $L$ of absolute
degree 1 modulo which $R$ is irreducible; the norm of any such prime ideal is a prime number of the desired form.
\end{proof}
\begin{lemma} \label{L:distinct values}
For any field $F$ of characteristic zero, there exist $a_1,\dots,a_{n-1} \in F$ such that
the values $P_{a,0}(a_1/n), P_{a,0}(a_2),\dots,P_{a,0}(a_{n-1})$ are pairwise distinct.
\end{lemma}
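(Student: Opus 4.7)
The plan is to reduce the lemma to a nonvanishing statement about a polynomial in $a_1,\ldots,a_{n-1}$ with coefficients in $\QQ$, and then exploit the infinitude of $F$.

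Concretely, set $\alpha_1 = a_1/n$ and $\alpha_k = a_k$ for $2 \leq k \leq n-1$, and consider
\[
D(a_1,\ldots,a_{n-1}) \;=\; \prod_{1 \leq i < j \leq n-1} \bigl( P_{a,0}(\alpha_i) - P_{a,0}(\alpha_j) \bigr) \;\in\; \QQ[a_1,\ldots,a_{n-1}].
\]
Because $F$ has characteristic zero, it contains $\QQ$ and is infinite, so any nonzero polynomial in $\QQ[a_1,\ldots,a_{n-1}]$ takes a nonzero value at some tuple in $F^{n-1}$; such a tuple provides the desired $a_i$. Hence it suffices to show $D \not\equiv 0$, or equivalently, that each factor $\Delta_{ij} := P_{a,0}(\alpha_i) - P_{a,0}(\alpha_j)$ is a nonzero polynomial.

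I would prove this by a clean specialization. For each fixed $i \in \{1,\ldots,n-1\}$, set $a_k = 0$ for all $k \neq i$ and leave $a_i$ as a free variable. Then $\alpha_k = 0$ for every $k \neq i$, while $\alpha_i$ remains nonzero for generic $a_i$; consequently
\[
Q_a(x) \;=\; n\, x^{n-2}(x - \alpha_i), \qquad P_{a,0}(x) \;=\; \int_0^x Q_a(t)\,dt \;=\; x^n - \frac{n\alpha_i}{n-1}\, x^{n-1}.
\]
From this formula one reads off $P_{a,0}(\alpha_k) = 0$ for $k \neq i$ and $P_{a,0}(\alpha_i) = -\alpha_i^{n}/(n-1)$, which is nonzero for $a_i \neq 0$. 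Thus under this specialization every $\Delta_{ij}$ with $j \neq i$ is nonzero, so each $\Delta_{ij}$ is a nonzero element of $\QQ[a_1,\ldots,a_{n-1}]$. Taking the product, $D \not\equiv 0$, completing the argument.

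There is no substantive obstacle. The only bookkeeping subtlety is the asymmetry between $a_1$ (which enters $Q_a$ only through the rescaled root $a_1/n$) and $a_2,\ldots,a_{n-1}$ (which are themselves roots of $Q_a$); this is absorbed by working uniformly with the $\alpha_k$, and the specialization above treats both cases at once. The case $n = 2$ is trivial since the pairwise-distinctness condition is vacuous.
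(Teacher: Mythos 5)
Your proof is correct, and it shares the paper's overall skeleton: reduce the lemma to the statement that the product of the pairwise differences $P_{a,0}(\alpha_i)-P_{a,0}(\alpha_j)$ is a nonzero element of $\QQ[a_1,\dots,a_{n-1}]$, then invoke the infinitude of $F \supseteq \QQ$ to find a common nonvanishing point. The difference is in how each factor is certified to be nonzero. The paper argues over $\RR$: it writes the difference as $\int (t-a_1/n)(t-a_2)\cdots(t-a_{n-1})\,dt$ taken between two of the roots of the integrand, and for real parameters ordered so that those two roots are adjacent, the integrand has constant sign on the interval, so the integral is nonzero; this is an analytic positivity argument. You instead produce an algebraic witness by specializing all variables except $a_i$ to zero, which collapses $P_{a,0}$ to the explicit binomial $x^n - \frac{n\alpha_i}{n-1}x^{n-1}$ and makes the relevant difference equal to $-\alpha_i^{n}/(n-1)$, a visibly nonzero polynomial in $a_i$; your computation checks out, including the $i=1$ case where $\alpha_1 = a_1/n$ still yields a nonzero polynomial in $a_1$. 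Your route has the mild advantage of being purely algebraic (no appeal to the ordering of $\RR$) and of handling every pair $(i,j)$ with a single uniform specialization, whereas the paper's sign argument avoids computing $P_{a,0}$ explicitly; both deliver exactly the nonvanishing needed.
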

\begin{proof}
For $a_1,\dots,a_{n-1} \in \QQ$ with $a_1 < \cdots < a_{n-1}$, 
\[
P_{a,0}(a_{n-1}) - P_{a,0}(a_{n-2}) = \int_{a{n-1}}^{a_{n-2}} (t - a_1/n)(t-a_2) \cdots (t-a_{n-1})\,dt \neq 0
\]
because the integrand has constant sign over the interval. Consequently, the polynomial $P_{a,0}(a_{n-1}) - P_{a,0}(a_{n-2})$ in $a_1,\dots,a_{n-1}$ is nonzero. By similar reasoning, each of the factors of the polynomial
\[
\prod_{i=2}^{n-1} (P_{a,0}(a_i) - P_{a,0}(a_1/n))
\prod_{2 \leq i < j \leq n-1} (P_{a,0}(a_j) - P_{a,0}(a_i))
\]
in $a_1,\dots,a_{n-1}$ is not zero, so the product is not the zero polynomial either. Since $F$ is infinite, the claim follows.
\end{proof}

We are now ready to complete the proof of Proposition~\ref{P:squarefree}.
\begin{proof}[Proof of Proposition~\ref{P:squarefree}]
By the previous discussion, it is sufficient to exhibit $a_1,\dots,a_{n-1} \in \ZZ$ for which
conditions (i)-(v) hold. 
Choose $p_0, R, p_1$ as in Lemma~\ref{L:irreducible}, making sure that $p_1 \notin S \cup \{1,\dots,n\}$.
Apply Lemma~\ref{L:distinct values} to choose $a'_1,\dots,a'_{n-1} \in \QQ$
for which 
\begin{equation} \label{eq:values}
P_{a',0}(a'_1/n), P_{a',0}(a'_2),\dots,P_{a',0}(a'_{n-1})
\end{equation}
are pairwise distinct. We can then choose a prime $p_2 \notin S \cup \{1,\dots,n,p_1\}$ for which the values
in \eqref{eq:values} have well-defined and distinct reductions modulo $p_2$. We now choose $a_1,\dots,a_{n-1}$ according to the following
conditions (which is evidently possible).
\begin{enumerate}
\item[(a)] The integer $a_1$ is coprime to $n$, and is divisible by $n-1$ and by all primes of $S$ not dividing $n$.
The integers $a_2, \dots, a_{n-1}$ are divisible by $n!$
and by all primes in $S$.
\item[(b)] The quantities $a_1/n, a_2, \dots, a_{n-1}$ are congruent modulo $p_1$ to the roots of $R'(x)$
(in some order).
\item[(c)] We have $a_i \equiv a'_i \pmod{p_2}$ for $i=1,\dots,n-1$.
\end{enumerate}
We then have
\[
Q_a(x) \equiv (nx-a_1) x^{n-2} = nx^{n-1} - a_1 x^{n-2} \pmod{n!},
\]
so the coefficient of $x^{m-1}$ in $Q_a(x)$ is divisible by $m$ for $m=1, \dots, n$.
Consequently, $P_{a,0}$ has integer coefficients. This verifies (i).
Suppose next that $p$ is a prime which either belongs to $S$ or is at most $n$. For $b \in \ZZ$,
$P_{a,0}(a_i) + b \equiv b \pmod{p}$ for $i=2,\dots,n-1$, while
\[
n^n P_{a,0}(a_1/n) + n^n b \equiv \begin{cases} a_1^n \pmod{p} & \mbox{if $p$ divides $n$} \\
n^n b \pmod{p} & \mbox{otherwise.} \end{cases}
\]
Consequently, $\Delta_{a,1}$ is not divisible by $p$; this verifies (ii), as well as (iii) for primes
$p \leq n$. For $p > n$ not in $S$, there are at least $p-n+1$ choices of $b \in \{0,\dots,p-1\}$
for which $\Delta_{a,b}$ is not even divisible by $p$, since each linear factor of $\Delta_{a,b}$
rules out exactly one choice of $b$. This verifies (iii). Since (b) implies (iv) and (c) implies (v), the
needed conditions are enforced, so the proof described above goes through.
\end{proof}

\section{Controlling real roots}

Note that the proof of Proposition~\ref{P:squarefree} cannot be used to deduce Theorem~\ref{T:squarefree}
because it does not allow any control of the number of real roots of the polynomial $P_{a,b}$.
Indeed, the function $P_{a,0}(x)$ is monotonic for $x$ large and for $x$ small,
so $P_{a,b}$ has at most 2 real roots for $|b|$ sufficiently large.
To obtain Theorem~\ref{T:squarefree}, we must modify the proof of Proposition~\ref{P:squarefree} so that
$b$ can be chosen to be a rational number within a suitable interval.

\begin{proof}[Proof of Theorem~\ref{T:squarefree}]
Define $p_0,R,p_1,p_2,a'_1,\dots,a'_{n-1}$ as in the proof of Proposition~\ref{P:squarefree}.
Let $U$ be the set of $A = (A_1,\dots,A_{n-1}) \in \RR^{n-1}$ for which there exists $B \in \RR$ such that
the polynomial $P_{A,B}$ has exactly $r$ real roots; the set $U$ is easily seen to be open and nonempty.
It is also homogeneous: if $(A_1,\dots,A_{n-1})$ belongs to $U$, then so does
$(\lambda A_1, \dots, \lambda A_{n-1})$ for any $\lambda \in \RR$. 
In other words, $U$ is the inverse image in $\RR^{n-1}$ of a nonempty open subset of the $(n-2)$-dimensional
projective space over $\RR$. In that space, the images of the tuples consisting of integers
satisfying conditions (a)-(c) of the proof of Proposition~\ref{P:squarefree} form a dense subset;
we may thus choose $(A_1,\dots,A_{n-1}) \in U$ satisfying these conditions.
Fix such a choice hereafter; there then exists a nonempty interval $I$ such that
$P_{A,B}$ has exactly $r$ real roots for all $B \in I$.

We wish to take $a_i = Aq$ for some positive integer $q \equiv 1 \pmod{n! p_1 p_2 \prod_{p \in S} p}$.
These still satisfy conditions (a)-(c) of the proof of Proposition~\ref{P:squarefree},
so conditions (i)-(v) are enforced. We may then sieve again (see Lemma~\ref{L:squarefree sieve2})
to show that among $b$ of this form congruent to $b_p$ modulo $p$ for each $p \in S$ and congruent to $b_1$ modulo $p_1$,
those for which $\Delta_{a,b}$ is squarefree occur with positive probability. 
Since the map $b \to \Delta_{a,b}$ is at most $(n-1)$-to-one, if we consider all $q \leq N^{1/(n-1)}$, 
the number of squarefree discriminants obtained is at least
$1/(n-1)$ times the maximum number of squarefree values of $\Delta_{a,b}$
achieved for any \emph{single} choice of $q$ in this range. This yields a lower bound of $c N^{1/(n-1)}$
discriminants of absolute value at most $N$, proving Theorem~\ref{T:squarefree}.
\end{proof}

In this case, the sieving argument needed is an easy (because we only consider polynomials composed of
linear factors) variant of the squarefree sieving for homogeneous binary forms introduced by Greaves \cite{greaves}.
However, we again prefer to apply Helfgott's machine rather than get into the details.

\begin{lemma} \label{L:squarefree sieve2}
Let $k,n,t$ be positive integers, and let $I$ be a nonempty open interval.
Suppose that $c_1,d_1,\dots,c_k,d_k \in \ZZ$ are such that
the polynomial $A(x,y) = (c_1 x + d_1 y) \cdots (c_k x + d_k y)$ is squarefree. 
For $p$ prime, if $p$ divides $t$, then put
\[
a(p) = 
\frac{\#\{x \in \{0,\dots,p^2-1\}: A(x,1) \not\equiv 0 \pmod{p^2}\}}{p^2};
\]
otherwise, put
\[
a(p) = 
\frac{\#\{(x,y) \in \{0,\dots,p^2-1\}^2: \gcd(x,y) = 1, A(x,y^n) \not\equiv 0 \pmod{p^2}\}}{p^2(p-1)^2}.
\]
Suppose that $a(p) > 0$ for all $p$.
Define
\[
S_N = \{(x,y) \in \ZZ^2: 1 \leq y \leq N, y \equiv 1 \pmod{t}, \gcd(x,y) = 1,
x/y^n \in I\}.
\]
Then 
\[
\lim_{N \to \infty} \frac{\#\{(x,y) \in S_N: A(x,y^n) \mbox{ is squarefree}\}}{\#S_N}
= \prod_p a_p > 0.
\]
\end{lemma}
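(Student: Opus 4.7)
The plan is to reduce the claim to Helfgott's general squarefree sieve \cite{helfgott}, applied to the bivariate polynomial $\tilde A(x, y) = A(x, y^n) = \prod_{i=1}^k (c_i x + d_i y^n)$ on the region $S_N$. First I would verify that $\tilde A$ is itself squarefree in $\ZZ[x, y]$: each factor $c_i x + d_i y^n$ is linear in $x$, hence irreducible over $\QQ$ (after stripping content), and two such factors can share a common factor only if $c_i d_j = c_j d_i$, which is ruled out by the squarefreeness of the original $A(x, y)$. I would also record the standard asymptotic $\#S_N \sim c_{I,t} N^{n+1}$ for an explicit constant $c_{I,t} > 0$, obtained by M\"obius inversion of the coprimality condition over the box $\{(x, y) : 1 \leq y \leq N, y \equiv 1 \pmod{t}, x/y^n \in I\}$.

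Next I would unpack the local densities prescribed in the lemma and match them with the Helfgott framework. For a prime $p$ dividing $t$, the congruence $y \equiv 1 \pmod{t}$ pins $y$ to a unit residue mod $p$; after the substitution $y = 1$ the local density collapses to the count of $x \pmod{p^2}$ with $A(x, 1) \not\equiv 0 \pmod{p^2}$, which is the first case. For $p \nmid t$, the pair $(x, y)$ ranges freely over coprime residue classes modulo $p^2$, and dividing the number of good pairs by the total $p^2(p - 1)^2$ such classes yields the second case. Convergence of $\prod_p a(p)$ to a positive limit is then immediate from the uniform bound $a(p) = 1 - O(1/p^2)$ valid for all but finitely many primes, together with the positivity hypothesis at the finitely many exceptional primes.

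With these ingredients in hand, one invokes Helfgott's sieve — in the homogeneous binary-form variant pioneered by Greaves \cite{greaves} — to conclude that the density of $(x, y) \in S_N$ with $\tilde A(x, y)$ squarefree equals $\prod_p a(p)$. The main obstacle, and the reason we defer to Helfgott rather than sieve by hand, is the contribution of medium-sized primes $p$: the trivial upper bound suffices for small $p$, and $p$ so large that $p^2$ exceeds the size of $\tilde A$ on $S_N$ give no contribution at all, but in the intermediate range one needs Greaves-type determinant estimates for the number of $(x, y) \in S_N$ with $p^2 \mid c_i x + d_i y^n$. Because each factor is linear in $x$, this lattice-point count reduces to a one-dimensional problem and Helfgott's hypotheses are straightforward to verify; the remaining bookkeeping on the local densities is routine.
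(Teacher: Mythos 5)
Your proposal follows essentially the same route as the paper: both reduce the statement to Helfgott's general squarefree sieve applied to $A(x,y^n)$ on $S_N$, with the decisive observation that each factor is linear in $x$, so that for fixed $y$ the divisibility conditions become congruences on $x$ over an interval and the error terms are controlled trivially (no genuine Greaves-type determinant estimates are needed). The paper merely carries out explicitly what you label routine --- setting up Helfgott's ``soil,'' checking conditions (h1), (h2), (A1), bounding $|r_{d,\emptyset}| \leq N h(d)$ for $h(d) \leq N^{1/4}$, and summing the resulting three error terms from his Corollary~3.3 against $\#S_N \sim c N^{n+1}$.
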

\begin{proof}
This follows from \cite[Corollary~3.3]{helfgott} modulo matching up notation, which we now explain
(following the model of \cite[Proposition~3.5]{helfgott}). We
define a \emph{soil} in the sense of \cite[\S 3.2]{helfgott} by taking
\begin{gather*}
\mathscr{P} = \{ \mbox{primes in $\ZZ$}\}, \qquad
\mathscr{A} =  S_N, \\
r(x,y) = \{p \in \mathscr{P}: p^2 | A(x,y^n)\}, \qquad
f(x,y,d) = \begin{cases} 1 & d = \emptyset \\ 0 & \mbox{otherwise.} \end{cases}
\end{gather*}
Put $X = \#S_N$; note that $X$ is asymptotic to a constant times $N^{n+1}$ as $N \to \infty$.
For $d \subset \mathscr{P}$ finite, write $h(d)$ for $\prod_{p \in d} p^2$;
this function evidently satisfies conditions (h1) and (h2) of \cite[\S 3.2]{helfgott}.
Let $K(d)$ be the number of $(x,y) \in \mathscr{A}$ with $d \subseteq r(x,y)$;
since there are only finitely many primes that can divide more than one factor of
$A(x,y^n)$, 
we have $K(d) \leq C_0 X/h(d)$ for some $C_0 > 0$ (dependent on everything but $N$ and $d$).
This means that condition
(A1) of \cite[\S 3.2]{helfgott} holds with $C_1 = 1, C_2 = 0$.

For $d$ with $h(d) \leq N^{1/4}$, put $g(d,\emptyset) = h(d) \prod_{p \in d} a(p)$;
we may then write
\[
K(d) = X \frac{g(d,\emptyset)}{h(d)} + r_{d,\emptyset}
\]
with $|r_{d,\emptyset}| \leq N h(d) \leq N^{5/4}$. (Namely, for each $y$, we are counting values of $x$
in an interval satisfying certain congruence conditions modulo $h(d)$; the difference between
this count and the expected value is at most 1 in each congruence class.)
By \cite[Corollary~3.3]{helfgott},
the difference
\[
 \prod_p a_p - \frac{\#\{(x,y) \in S_N: A(x,y^n) \mbox{ is squarefree}\}}{X}
\]
is bounded in absolute value by
\[
C_1 \sum_{d: h(d) > N^{1/4}} \frac{3^{\#d}}{h(d)} + 
X^{-1} \sum_{d: h(d) \leq N^{1/4}} |r_{d, \emptyset}| + X^{-1} C_2 \sum_{p > N} \frac{X}{p^2}
\]
for some $C_1, C_2 > 0$ independent of $N$.
The first and third terms evidently tend to 0 as $N \to \infty$; the second does also because
it is bounded above by $X^{-1} N^{1/4} N^{5/4} \leq C_3 N^{-n+1/2}$ for some $C_3>0$ independent of $N$.
This proves the claim.
\end{proof}

\section{Further remarks}

We have not attempted to improve upon the lower bound of $cN^{1/(n-1)}$
in Theorem~\ref{T:squarefree} or its corollaries. Nakagawa \cite{bib:nak1}, \cite{bib:nak2} attempted to show that 
the number of real quadratic fields admitting 
$A_n$-extensions unramified at all finite primes with discriminants
in $[-N, N]$ is at least $c_nN^{(n+1)/(2n-2)}$
for all $n$, but these proofs were later retracted. 
This order of growth would be obtained if one could show that the discriminant of an arbitrary (not necessarily monic)
integer polynomial of degree $n$ is squarefree with positive probability \emph{and} that the number of distinct discriminants obtained is at least a
fixed 
positive fraction of the number of polynomials considered. 
Even if one assumes the \emph{abc} conjecture, so that the first issue is resolved by Poonen's theorem,
the second issue remains: for any construction of polynomials involving more than one free parameter,
it is nontrivial to ensure that the same discriminant does not occur too many times.

It would also be of interest to extend our results by allowing
restrictions on the splitting of some finite places. Our method is unsuitable for this purpose: a splitting requirement
would constitute an additional restriction on the reduction of a
polynomial modulo specific primes, which is hard to integrate with the
requirement that the derivative of the polynomial have rational roots
modulo any prime. Indeed, for primes less than $n$, certain splitting 
requirements on the polynomial alone (e.g., that it factors completely) are
incompatible with having a squarefree discriminant, because there are not enough
residual roots available for them to be pairwise distinct.

\subsection*{Acknowledgments}
This paper is a revised version of our unpublished 2003 preprint ``Number fields
with squarefree discriminant and prescribed signature,'' written while the author
was supported by an NSF Postdoctoral Fellowship.
The revisions were made during the spring 2010 MSRI program in Arithmetic Statistics,
during which the author was supported by NSF CAREER grant DMS-0545904,
DARPA grant HR0011-09-1-0048, MIT (NEC Fund,
Cecil and Ida Green Career Development Professorship), and UC San Diego
(Stefan E. Warschawski Professorship).
Thanks to Manjul Bhargava, Chandan Singh Dalawat, Noam Elkies, and Harald Helfgott for helpful discussions, and especially
to Bhargava for encouraging us to revisit the 2003 manuscript.

\end{document}